\newcommand{\numberset}{\mathbb}
\newcommand{\N}{\numberset{N}}
\newcommand{\R}{\numberset{R}}
\newcommand{\dv}{\nabla\cdot}
\newcommand{\rot}{\nabla\times}
\newcommand{\ve}{\Vec{v}}
\newcommand{\dez}{\partial_z}
\newcommand{\grad}{\nabla}
\newcommand{\vx}{v^x}
\newcommand{\vz}{v^z}
\newcommand{\Ra}{R\!a\,}
\newcommand{\Prr}{P\!r \,}
\newcommand{\pr}{\frac{1}{\Prr}}
\newcommand{\be}{\beta}
\newcommand{\eb}{e^{\be z}}
\newcommand{\en}{e^{-\be z}}
\newcommand{\ebmm}{e^{-\frac{\be}{2}z}}
\newcommand{\ebmp}{e^{\frac{\be}{2}z}}
 \numberwithin{dummy}{section}
\newtheorem{theo}{Theorem}[section]
\newtheorem{lem}{Lemma}[section]
\newtheorem{defin}{Definition}[section]
\theoremstyle{definition}
\begin{document}

\title{A Lorenz Model for an Anelastic Oberbeck-Boussinesq System}


\maketitle
\qquad\qquad\qquad\qquad\qquad\author{Diego Grandi, Arianna Passerini, Manuela Trullo}
\footnote{Department of Mathematics and Computer Science, University of Ferrara, Via Machiavelli 30, 40121 Ferrara, Italy,
e-mail: {\tt\small grndgi@unife.it,ari@unife.it,manuela.trullo@edu.unife.it}}

\begin{abstract}
In an Oberbeck-Boussinesq model, rigorously derived, which includes compressibility, one could expect that the onset of convection for B\'{e}nard's problem occurs at a higher critical Rayleigh number. Since of the difficulties related to the new partial differential equations, with non constant coefficients and a non divergence-free velocity field, we show the increased stability of the rest state by exploring the related Lorenz approximation system.
\end{abstract}

\vspace{.7truecm}
\keywords{\emph{Keywords}: Oberbeck-Boussinesq, Lorenz model, Stability, Compressible fluids,  Anelastic approximation.}\\

%
%
%
\section{Introduction}

\vspace{.5cm}

Since long time, the Oberbeck-Boussinesq approximation \cite{B03,Cha61} is present in literature, as a relatively addressable system of partial differential equations which describes convection in a horizontal layer of fluid. The object of the study is the flow driven in the layer by a downward gradient of temperature $T$, due to a thermal exchange with the environment, resulting in a constant difference $\delta T$ between the boundaries. This is called B\'{e}nard's problem and studying it by the O-B system supplies some facilities. From the very beginning, as an approximation of the full set of Navier-Stokes-Fourier balance laws for natural convection in Newtonian fluids, the O-B system turned out to be a good model for engineers, but the theoretical explanation.

As a matter of fact, the velocity field $\ve$ in classic O-B system  satisfies $\dv\ve=0$ at all points and times. This feature allows to separate the system by the Helmoltz-Weyl decomposition and solving for $\ve$ without knowing about the pressure field $p$, which can be found at the end by using $\ve$ and $T$ as data. Notice that, classically, all the best, physically reasonable, regularity properties of Navier-Stokes solutions are found for isochoric flows. Furthermore, in the classic O-B system, an highly non linear term in the energy balance equation, proportional to the squared norm of the symmetric part of the velocity gradient, is neglected. Finally, it is somehow assumed that these flows are compatible with a solely temperature dependent density, as occurs for water which is incompressible in standard conditions. In particular, the thermal expansion gives a (linear) contribution to the dynamics only in the buoyancy term, so that all the coefficients of the system are constants.

The O-B system was derived as a formal limit of N-S-F only in the last decades, see for instance \cite{RRS96}. More recently, in \cite{GMR12, GR12}, it was shown that demanding for thermodynamical variables which do not depend on pressure, could be not compatible with thermodynamic principles. In particular, the Gibbs law could be not fulfilled  and stability in waves propagation would not come out, since of the loss of concavity of the chemical potential. Therefore, compressibility including generalizations of the O-B approximation are needed and they were studied from the well-posedness and stability point of view, for instance in \cite{CP, DP19, ACDLM}. On the other hand, all new approximation models should be derived by perturbative methods from the full set of balance equations. This can be found in \cite{RSV15, GP21} by keeping the condition of solenoidal $\ve$ even for gases, provided one chooses a suitable region of the three non dimensional parameter space, which are briefly introduced here below.

Let us assume that all flows meet the condition of small variations in density with respect to the lower plane, where the thermodynamical state is constant, so that density and temperature are $\rho=\rho_d$ and $T=T_d$ there. Thus, by taking as independent thermodynamical variables $T$ and $p$ (two are sufficient) and considering a linear expansion of the density, by \textit{compressibility} we mean the partial derivative of the density with respect to $p$ at constant $T$, evaluated in the reference state $(T_d, p_d)$ and divided by $\rho_d$. Such a quantity has the dimension of the inverse of $p$. Since a typical value of $p$ in convective phenomena could be the hydrostatic pressure $\pi=\rho_d g h$, where $g$ is the gravity acceleration and $h$ the height of the layer, we choose $\pi$ as reference value for the pressure (not necessarily deduced, as in in \cite{PR}, from the other reference units) and define a non dimensional compressibility factor
\begin{equation}\label{beta}
\beta:=gh \frac{\partial \rho}{\partial p}(T_d,p_d)\, .
\end{equation}
This should be added to the other two independent non dimensional parameters, which are sufficient to describe the classic incompressible convection, i.e. the Prandtl and Rayleigh numbers
\begin{equation}\label{eq:Pra}
 \Prr:=\frac{ \mu}{\rho_d\kappa}\qquad\qquad\qquad\Ra:=\frac{\rho_d\, \alpha_d \,|\delta T| h^3 g}{\mu\kappa}\, ,
\end{equation}
where the viscosity $\mu$ and the thermal diffusivity $\kappa$ hinder the convective motion, whereas $\alpha_d>0$, the thermal expansion coefficient, can trigger it. Precisely, the  definition is
$$\alpha_d=\frac{1}{\rho_d}\left|\frac{\partial \rho}{\partial T}(T_d,p_d)\right|\, .$$
The non dimensional counterpart of $\alpha_d$
\begin{equation}\label{alfa}
\alpha=\frac{|\delta T|}{\rho_d}\left|\frac{\partial \rho}{\partial T}(T_d,p_d)\right|\, ,
\end{equation}
is negligible with respect to $\beta$ and $\Ra$ in the \textit{anelastic O-B system} we are going to analyze in the present paper.

It can happen that in different regions of the parameter space the condition $\dv\ve=0$ could be relaxed in performing formal limits which lead to reliable approximations for compressible fluids: see \cite{Fei08} for the isothermal case, named \textit{anelastic approximation}, and see \cite{GP20} for the stratified O-B approximation we deal with. The problem with the equations in \cite{GP20} is that the balance of mass consists in the equation $\dv(\rho_e\ve)=0$, where $\rho_e$ is that density which corresponds to the \textit{rest state solution} and depends on $z$, $z$ the usual coordinate in the ascendant vertical direction, while it is constant when one derives the classic O-B system. In such a framework, stability results and well-posedness are harder to find, that is why in the present paper we face the problem of stability using the Lorenz model of the new O-B system, as done in \cite{DP20} for the model studied in \cite{CP}.

The Lorenz model is an ordinary non linear differential equation system, autonomous and homogeneous. It is mainly studied as the first and most famous example of chaotic system, but we are going to study only its first bifurcation in order to see for which critical value of $\Ra$ the rest state becomes unstable. The three unknowns of the system are the coefficients of the simplest Galerkin approximation solution which reproduces the expected behaviour of the full system: stability of the rest state for $\Ra$ sufficiently small, then, by increasing it, the onset of steady convection is observed as \textit{exchange of stability}, so that the rest state becomes a saddle point, at the end.

The objective of the paper is to prove the stabilizing effect of $p$ on the rest state solution of the model proposed in \cite{GP20}, at least in the related Lorenz model. As a matter of fact, in the role of state equation we use a first order Taylor expansion for density which, in non dimensional units ($\rho_d=1$, $|\delta T|=1$), is
\begin{equation}\label{steq}
\rho=1-\alpha (T-T_d)+\beta(p-p_d)\, ,
\end{equation}
with $\alpha$ and $\beta$ defined in (\ref{alfa}) and (\ref{beta}). As is evident by the sign, the compressibility correction in the buoyancy force has an opposite effect with respect to the thermal expansion (but for water in between $273K$ and $277K$), since the pressure makes the volume smaller. Thus, any contribution due to $\beta>0$ should increase the value of $\Ra$ beyond which convective motions are observable. It should be larger than the classic exact result $\Ra=\frac{27\pi^4}{4}$.

The structure of the paper is the following: in the next section, we present the new non dimensional PDE system describing convection and compare the rest state solution of the full N-S-F system, to which we append constitutive equation (\ref{steq}), with the rest state solution of the anelastic O-B; next, we choose the same boundary conditions allowing the classic exact result for the first stability transition; moreover, we look for a 2D solution and use a streamfunction $\psi$ to get the velocity field; finally, we introduce a new suitable complete basis which can be used to express both $\Delta\psi$ and the perturbation temperature $\tau$. The perturbation, which is the difference between $T$ and the linear stratified conduction profile $T_e=T_d-z$, enjoys Dirichlet conditions as $\Delta \psi$ does.

In the last section, the Lorenz system is finally derived. Since of the particular choice of the basis, whose main property is the diagonalization of the diffusive term (see \cite{P21}, where the existence is studied for the isothermal case), the Lorenz system can be reduced to the classic one by scaling the coefficients. It is exactly by means of the scaling equations that we prove that $r=1$, the critical value for Lorenz, corresponds to $\Ra(\beta)>\frac{27\pi^4}{4}$.


\section{Preliminary results}

\vspace{.5cm}

Let us specify all the typical physical quantities defining the non dimensional system in \cite{GP20}
\begin{equation}\nonumber \label{e:dimensionlessv}
t=\frac{h^2}{\kappa}\, ,\qquad
z=h\, ,\qquad
T=\delta T\, ,\qquad
\rho=\rho_d\, .
\end{equation}

Now, it is useful to stress how the constitutive equation (\ref{steq}) implies an \textit{environmental} stratified steady density $\rho_e$ which, as will be seen below, induces a specific inner product in the Hilbert space of the solutions.

By letting arbitrary the state equation which defines the local thermodynamical equilibrium, it is immediately verified by direct substitution that the N-S-F system in a layer
allows as a basic solution $(\ve_e,T_e,p_e,\rho_e)$
\begin{equation}\label{fullsol}
\ve_e=0\qquad\, T_e=T_d-z\qquad\, p_e=p_d-\int_0^zf(z)dz\qquad\, \rho_e=f(z)\, .
\end{equation}

Equation (\ref{steq}) gets rid of any freedom and one finds
\begin{equation}\label{eqsol}
\rho_e= \en + \frac{\alpha}{\be}(1-\en)\qquad\, p_e= p_d-\frac{1-\en}{\be}+\frac{\alpha}{\be}\left(\frac{1-\en}{\be}-z\right)\, .
\end{equation}
The system we study is the limit of the non dimensional form of N-S-F system as $\alpha\rightarrow 0$ when all the other non dimensional coefficients of the PDE system are kept constant. In particular, from $\Ra$ constant it follows clearly
$$\mathcal{G}:= \frac{\rho_d\, h^3 g}{\mu\kappa}=O\left(\frac{1}{\alpha}\right).$$

As a consequence, also the dissipative non linear term in the energy balance disappears from the equations as $\alpha$ tends to zero, because its coefficient is $\frac{\Prr}{\mathcal{G}}\Lambda$, where $\Lambda:= \frac{g\, h}{C_V|\delta T|}$ is the ratio between adiabatic and thermal gradients and $C_V$ denotes the specific heat at constant volume (see \cite{GP20} for all details).

In the limit (\ref{eqsol}) becomes
\begin{equation}\label{eqsol}
\rho_e= \en \qquad\, p_e= p_d-\frac{1-\en}{\be}\, ,
\end{equation}
while the adiabatic O-B system, from now on indicated by $(OB)_\be$, is
\begin{align*}
\textrm{(OB)}_\be\qquad
\left\{
\begin{array}{ll}
\displaystyle \dv \ve= \be \vz,\\
\displaystyle \en \, \frac{1}{\Prr}( {\ve}_{t}+\ve\cdot\nabla\ve)- \,\left(\be \gamma\grad \vz+ \Delta \ve\right)=-\nabla P+\left(\sqrt{\Ra}\,\tau-\beta P\right) \,\\
\en(\tau_t+\ve\cdot\grad\tau)-\Delta\tau=\sqrt{\Ra}\,\vz\, ,
\end{array}
\right.
\end{align*}
where, given the bulk viscosity $\zeta$, then $\gamma:=\frac{\zeta}{\mu} +\frac{1}{3}$. Here, just for symmetry between the two evolutionary equations, we scaled $\small \tau:=\sqrt{\Ra}(T-T_e)$ and set $P=p-p_e$. Partial derivatives are herein denoted by a subscript with the related variable.

Notice that the first equation in $(OB)_\be$ is verified if and only if
\begin{equation}\label{anel}
\dv (\en\, \ve)=-\be \en \vz+\en \dv \ve=0\, .
\end{equation}
Further, we can multiply the second and third equations in $(OB)_\be$ by $\eb$, set
\begin{equation}\label{Pigreco}
\Pi:=\eb \left(P- \,\gamma\beta v^z\right)\, ,
\end{equation}
and write in place of $(OB)_\be$ the equivalent system
\begin{align}\label{ansys}
\left\{
\begin{array}{ll}\medskip
\dv(\en \ve)=0 \\
\displaystyle  \frac{1}{\Prr} \,( \ve_t+\ve\cdot\grad\ve)- \eb  \Delta \ve =-\nabla \Pi + \eb\left(\sqrt{\Ra}\,\tau - \gamma\beta^2 v^z\vec{e}_3\right)\, \\
\tau_t+\ve\cdot\grad\tau-\eb \Delta\tau=\,\sqrt{\Ra}\,\vz
\end{array}
\right.
\end{align}

In order to compare the critical values of $\Ra$, we put ourselves in the same situation of classical O-B and Lorenz systems: we study (\ref{ansys}) in a 2D periodicity cell $\Omega=(0,\emph{l})\times(0,1)$ and append to it the following boundary conditions
\begin{equation}\label{bc}
\vz(x,0,t)=\vz(x,1,t)=0\qquad\qquad\vx_z(x,0,t)=\vx_z(x,1,t)=0\qquad\qquad\tau(x,0)=\tau(x,1)=0
\end{equation}
\begin{equation}\nonumber
\ve(0,z,t)=\ve(\emph{l},z,t)\qquad\qquad\tau(0,z,t)=\tau(\emph{l},z,t)
\end{equation}
which are impermeability conditions together with no-stress conditions (in the flat geometry of the layer) and periodicity along the horizontal direction $x$.

Moreover, we notice that with these conditions the system is invariant by dragging along the $x$-direction, so we assume that the mean value of $\vx$ is zero at all times:
\begin{equation*}
\int_{\Omega}\vx(x,z,t)\,dV=0\, ,
\end{equation*}
otherwise, the rest state solution would not be the only basic steady solution and any $\ve=V\vec{e}_1$ with constant $V$ would be steady solution as well, arbitrarily close to the rest state.

Actually, in two dimensions it is convenient the streamfunction formulation of problem (\ref{ansys}), which is obtained by applying the operator $\rot$ to both sides of the second equation and by replacing $\ve=\eb(-\psi_z\Vec{e}_1+\psi_x^j\Vec{e}_3)$ both in the second and in the third one. In a 2D domain such a form for $\ve$ automatically verifies the first equation in (\ref{ansys}), because of the streamfunction $\psi$.

Then, the second equation becomes a scalar equation: as a matter of fact, the system is invariant with respect to the direction $\Vec{e}_2$, normal to the strip where the system is considered and can be solved with no need of a third spatial variable $y$. The computation is elementary but long, see \cite{Tru} for all details. Therefore, we directly write the resulting final system.

Use will be made of a \textit{vorticity operator}, whose domain is specified below, which transforms $\psi$ in the function $-\omega\en$, where
\begin{equation}\label{vorticity}
    \omega := \vx_z - \vz_x= - \eb ( \Delta \psi + \be \psi_z)\, .
\end{equation}
Here, we use (\ref{vorticity}) just to represent shortly
system (\ref{ansys}) as equivalent to the two equations
\begin{align}\label{oursys}
    \left \{ \begin{array} {ll}
    & \pr (\omega_t + \be \eb \omega \psi_x + \eb (\psi_x \omega_z - \psi_z \omega_x) ) - \eb \Delta (\omega - \be \eb \psi_z  ) 
    = - \sqrt{\Ra} \eb \tau_x + \gamma \be ^2  e^{2\be z} \psi_{xx} \, ,\\
& \hspace{1cm} \\    
    &    \tau_t + \eb (\psi_x \tau_z - \psi_z \tau_x)  -\eb \Delta \tau = \sqrt{\Ra} \eb \psi_x\, .
\end{array} .
\right .
\end{align}

As shown in \cite{P21}, inserting condition (\ref{anel}) in the system implies to look for a new basis which diagonalizes the term with the highest derivative when constructing Galerkin approximation solutions, otherwise there is no hope to maintain the structure of the Lorenz model, as can be seen in the next section.

Here below, the suitable basis to express any $\ve$ in the solution of system $(\ref{ansys})$ is given as $\Vec{\psi}^j=\eb(-\psi_z^j\Vec{e}_1+\psi_x^j\Vec{e}_3)$, where $j$ is a multi-index for
$$\psi^j=\varphi^je^{-\frac{\be}{2}z}\, ,$$
with $\varphi^j$ eigenfunction of the usual Laplace operator (see \cite{G1}) with homogeneous boundary conditions in $z=0,1$ and periodic in $x$. Concerning this last feature,
the normalized orthogonal classic Fourier basis in $L^2(0,\emph{l})$ is now introduced, through a notation with a 2-valued index $i=\pm1$, i.e.
\begin{align}\label{per}
\phi^{1m}(x)=\sqrt{\frac{2}{\emph{l}}}\cos \left (\frac{2 \pi m }{\emph{l}}x \right )\qquad\,  \phi^{-1m}(x)=\sqrt{\frac{2}{\emph{l}}}\sin \left (\frac{2 \pi m }{\emph{l}}x \right )\, .
\end{align}
But the normalization constant $\sqrt{\frac{2}{\emph{l}}}$ should be replaced in case $m=0$ by $\sqrt{\frac{1}{\emph{l}}}$.
This notation, introduced first in \cite{CP}, can be very useful in calculations because the functions (\ref{per}) enjoy the property
\begin{align}\label{dex}
\frac{d\phi^{im}}{dx}=-i\frac{2 \pi m }{\emph{l}}\phi^{-im}\qquad\, i=\pm1\, .
\end{align}
Next, let us denote by
$$\chi^n(z):=\sqrt{2}\sin (n\pi z)\, ,$$
a well-known complete basis in $L^2(0,1)$. As a matter of fact, the one-dimensional Laplace operator $\mathcal{L}$ which makes the second derivative of all functions $\chi$ whose first and second derivative are in $L^2(0,1)$ and $\chi(0)=\chi(1)=0$, proves to be self adjoint: integrating by parts twice, one sees that $$\langle\chi_1,\chi_2''\rangle=\langle\chi_1'',\chi_2\rangle\,  ,$$ can be obtained if and only if $\chi_1$ and $\chi_2$ are in the same space. Further, $\mathcal{L}$ is invertible since no linear function vanishes in both $z=0,1$. Moreover, $\mathcal{L}^{-1}$ is compact since, integrating by parts and using Schwarz inequality:
$$\mathcal{L}(\chi)=f \qquad \Rightarrow \qquad \|\chi'\|_2\leq \|f\|_2\, ,$$
as can be seen by performing the scalar product of the equation with $\chi$, integrating by parts and using Schwarz inequality. Therefore, the image of $\mathcal{L}^{-1}$ is compactly embedded in $L^2(0,1)$ because of Rellich Theorem (which easily follows, in one dimension, from Ascoli-Arzel\`{a} theorem). Finally, by the Spectral Theorem the eigenfunctions $\chi^n(z)$ of $\mathcal{L}$, corresponding to eigenvalues $-n^2\pi^2$, are a complete basis.

Now, let us write
\begin{align}\label{bbasis0}
\varphi^j:= \varphi ^{imn} (x,z):=\phi^{im}(x)\chi^n(z)\qquad\, i=\pm1\, \,  m,n \in \N \, , \, \, n \neq 0\, ,
\end{align}
The $\varphi^j$'s in (\ref{bbasis0}) provide a basis for $\ve$ in the solution of the classic O-B problem ($\beta=0$), with the same boundary conditions (\ref{bc}), through the relation
$$\Vec{\psi}^j=-\varphi_z^j\Vec{e}_1+\varphi_x^j\Vec{e}_3\, ,$$ while the $\varphi^j$'s themselves are a basis for the temperature field too.

Now, we are going to show that the set of $\psi^j$ defined by
\begin{align}\label{bbasis}
\psi^j:= \psi^{imn} (x,z):=\phi^{im}(x)\chi^n(z)\ebmm\qquad\, i=\pm1\, \,  m,n \in \N \, , \, \, n \neq 0\, .
\end{align}
provides a suitable basis for $\ve$ in $(\ref{ansys})$ and is a basis for $\tau$ too.

It is then necessary to deduce boundary conditions (\ref{bc}) from basis (\ref{bbasis}): it can be seen by direct computation that $\Vec{\psi}^{j}=\eb (-\psi_z^j\Vec{e}_1+\psi_x^j\Vec{e}_3)$ enjoys boundary conditions
$$(\Vec{\psi}^{j})^{z}=0\,,\ \qquad   (\Vec{\psi}^{j})^{x}_z=0\qquad \ at\, \, \,  z=0,1\,\, .$$

Indeed, the required boundary conditions for $\Vec{\psi}^j$ follows from (\ref{dex})
and, further, from
\begin{align}\label{newbc}
  &\psi^{imn}(x,0)=\psi^{imn}(x,1)= 0\, , \\
  &\psi_{zz}^{imn}(x,0)+ \beta \psi_{z}^{imn}(x,0) =\psi_{zz}^{imn}(x,1)+ \beta \psi_{z}^{imn}(x,1) = 0\nonumber\, .
  \end{align}
These last are immediate consequence of definition (\ref{bbasis}).

Finally, let us define $\mathcal{C}^{\infty}(\Omega)$ as the set of the restrictions to $\Omega$ of $C_0^{\infty}$-functions whose support is anywhere in the strip between $z=0$ and $z=1$. Next, we denote with $W^{r,2}_0(\Omega)$ the closure of $\mathcal{C}^{\infty}(\Omega)$ in the usual norm $\| \cdot \|_{r,2,\Omega}$ of the Sobolev spaces, $r\in \N$ (for $r=0$ one gets $L^2(\Omega)$).

Still, by direct computation and through (\ref{dex}), see again \cite{Tru}, one can easily verify that $\psi^{imn}$ are eigenfunctions of the operator defined by (\ref{vorticity}) in the domain $W^{2,2}_0(\Omega)$: in fact the $L^2$-norm of the Laplacian and those of the second derivatives are equivalent in our $\Omega$, see for instance \cite{G1}.
Finally, we can proof that the couple $\psi$ and $\tau$ solving system $(\ref{oursys})$, under conditions (\ref{newbc}) for $\psi$, while for $\tau$ the first condition in (\ref{newbc}) is sufficient, can be both expressed in $L^2(\Omega)$ just by these eigenfunctions. The advantage is that, by writing $(\ref{oursys})$ as series, the linear coupling terms in the two equations are expressed by the same basis function.
\begin{theo} \label{BBasis}
The set of all the eigenfunctions $\psi^{imn}\in W^{2,2}_0(\Omega)$ of the vorticity operator:
\begin{equation*}
\Delta \psi^{imn}+\be \psi^{imn}_z=\mu^{mn}\psi^{imn}\qquad\qquad \mu_{mn} =-\left(\frac{\be^2}{4} + \frac{4 m^2 \pi ^2 }{\emph{l}^2}+ n^2 \pi ^2\right) .
\end{equation*}
is a complete basis for $L^2(\Omega)$. Moreover, the eigenfunctions are orthonormal with respect to the inner product
\begin{equation*}
\langle\psi^{imn},\psi^{jkl}\rangle_{_{\be}}:=\int_{\Omega}\psi^{imn}\eb\psi^{jkl}\, d\Omega=\delta_{ij}\delta_{mk}\delta_{nl}
\end{equation*}
\end{theo}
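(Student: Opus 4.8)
The plan is to verify the eigenvalue relation by direct computation and then to reduce both the completeness and the orthonormality to the corresponding known facts for the standard Fourier basis $\phi^{im}(x)$ and the $\chi^n(z)$ basis, using the explicit weight $\en$ that appears in $\langle\cdot,\cdot\rangle_\be$. First I would compute $\Delta\psi^{imn}+\be\,\psi^{imn}_z$ with $\psi^{imn}=\phi^{im}(x)\chi^n(z)\ebmm$. Writing $\psi^{imn}=\phi^{im}(x)\,g^n(z)$ with $g^n(z)=\chi^n(z)\ebmm$, one has $g^{n\prime\prime}+\be g^{n\prime}=\ebmm\bigl(\chi^{n\prime\prime}-\tfrac{\be^2}{4}\chi^n\bigr)=-\bigl(n^2\pi^2+\tfrac{\be^2}{4}\bigr)g^n$, since the cross terms in $\be$ cancel exactly — this cancellation is the whole point of the shift by $\ebmm$. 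Combining with $\phi^{im\prime\prime}=-\tfrac{4m^2\pi^2}{\emph{l}^2}\phi^{im}$ (a consequence of applying (\ref{dex}) twice) gives the stated eigenvalue $\mu_{mn}=-\bigl(\tfrac{\be^2}{4}+\tfrac{4m^2\pi^2}{\emph{l}^2}+n^2\pi^2\bigr)$, and one checks the boundary conditions defining $W^{2,2}_0(\Omega)$ via (\ref{newbc}).

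Next I would establish orthonormality in $\langle\cdot,\cdot\rangle_\be$ by separating variables in the integral:
\begin{equation*}
\langle\psi^{imn},\psi^{jkl}\rangle_\be=\int_0^{\emph{l}}\phi^{im}\phi^{jk}\,dx\;\int_0^1\chi^n\chi^l\,\ebmm\,\ebmm\,\eb\,dz=\Bigl(\int_0^{\emph{l}}\phi^{im}\phi^{jk}\,dx\Bigr)\Bigl(\int_0^1\chi^n\chi^l\,dz\Bigr).
\end{equation*}
The weight $\eb$ is exactly cancelled by the two factors $\ebmm$, so the $z$-integral collapses to the ordinary $L^2(0,1)$ inner product of $\chi^n$ and $\chi^l$, which is $\delta_{nl}$, while the $x$-integral is $\delta_{ij}\delta_{mk}$ by the normalization in (\ref{per}). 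This gives the Kronecker-delta identity in the statement. The same cancellation identifies the map $u\mapsto \ebmm u$ as a unitary isomorphism from $L^2(\Omega)$ (with weight $\eb$) onto the unweighted $L^2(\Omega)$, carrying $\psi^{imn}$ to $\varphi^{imn}=\phi^{im}\chi^n$.

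For completeness I would argue that since $\{\varphi^{imn}=\phi^{im}(x)\chi^n(z)\}$ is a complete orthonormal basis of the unweighted $L^2(\Omega)$ — being the tensor product of the complete Fourier basis on $(0,\emph{l})$ with the complete basis $\{\chi^n\}$ on $(0,1)$ established earlier via the Spectral Theorem for $\mathcal L$ — the unitary multiplication operator $u\mapsto\ebmm u$ maps it to a complete orthonormal set in the weighted space, namely $\{\psi^{imn}\}$; equivalently, if $w\in L^2(\Omega)$ is $\langle\cdot,\cdot\rangle_\be$-orthogonal to every $\psi^{imn}$, then $\ebmp w$ is orthogonal to every $\varphi^{imn}$ in unweighted $L^2$, hence vanishes. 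I expect the only genuinely delicate point to be bookkeeping: making sure the $\ebmm$ factors and the weight $\eb$ combine as claimed and that the $m=0$ normalization exception in (\ref{per}) is correctly absorbed into the $\delta_{mk}$; the functional-analytic content is entirely inherited from the already-proven completeness of $\{\chi^n\}$ and the classical Fourier system, so no new compactness or spectral-theory argument is needed here.
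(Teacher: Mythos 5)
Your proposal is correct and follows essentially the same route as the paper: verify the eigenvalue relation and orthonormality by direct computation, then reduce completeness to the known completeness of the Fourier basis $\phi^{im}$ and of $\{\chi^n\}$ by exploiting that multiplication by $\ebmm$ absorbs the weight $\eb$. The only difference is packaging — you phrase the reduction as a single unitary multiplication operator on $L^2(\Omega)$ acting on the tensor-product basis, whereas the paper expands first in $x$ and then applies the same $\ebmp$/$\ebmm$ trick to each coefficient $C^{im}(z,t)$ in $L^2(0,1)$ — which is a cosmetic rather than substantive distinction.
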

\begin{proof}
First, one can verify by direct elementary calculations that the formulas in the statement follows from (\ref{bbasis}). Then, the shortest way to show that these eigenfunctions are a complete basis is the following: consider a periodic in $x$ arbitrary $\psi\in L^2(\Omega)$ and represent it, a.e. in $z$, as a classic Fourier series with its basis functions $\phi^{im}$, which belongs to $C^2(0,\emph{l})$, and general coefficients depending on $z$. So, for instance, $\psi$ in $(\ref{oursys})$
can be written as
$$\psi(x,z,t)=\sum_{i,m}C^{im}(z,t)\phi^{im}(x)\qquad\, C^{im}(z,t)=\langle\phi^{im},\psi\rangle\, ,$$
where the inner product is the usual one in $L^2(0,\emph{l})$.
Therefore, from (\ref{bbasis}), it follows that the proof can be completed by showing that $C^{i,m}(z,t)$, which necessarily belong to $L^2(0,1)$, can be univocally written as series of the functions
$\chi^n(z){\ebmm}\, .$
To this end, we implicitly define $g^{im}$ by
$$C^{im}(z,t)=C^{im}(z,t)\ebmp\ebmm=g^{im}(z,t)\ebmm\, .$$
Since $\ebmp$ is bounded in $(0,1)$, then $g^{im}\in L^2(0,1)$ and one can write the identity
$$g^{im}=\sum_n\langle\chi^n,g^{im}\rangle\, \chi_n=\sum_n\langle\chi^n\ebmm,\eb g^{im}\ebmm\rangle\, \chi_n=\sum_n\langle\chi^n\ebmm, C^{im}\rangle_{_{\be}}\, \chi_n\, .$$
Let us multiply both sides by $\ebmm$:
$$C^{im}(z,t)=\sum_n\langle\chi^n(z)\ebmm, C^{im}(z,t)\rangle_{_{\be}}\, \chi_n(z)\ebmm\, $$
and, by deriving the (equivalent) norm from the new inner product, we have also
$$\|C^{im}\|_{_{\be}}^2=\sum_n\langle\chi^n\ebmm, C^{im}\rangle_{_{\be}}^2\, .$$
Finally, we can substitute the expression for $C^{im}(z,t)$ in that for $\psi(x,z,t)$ and, setting for brevity $A^{imn}(t)=\langle\chi^n\ebmm, C^{im}\rangle_{_{\be}}\, ,$ obtain
$$\psi(x,z,t)=\sum_{i,m,n}A^{imn}(t)\phi^{im}(x)\chi_n(z)\ebmm\, ,$$
which completes the proof since of definition (\ref{bbasis}).
\end{proof}

\section{The compressible Lorenz model}
\vspace{.5cm}

For \textit{approximation solution} of the non linear initial boundary value problem we mean a kind of finite dimensional tentative solution, which is a linear combination of $n$ elements of the basis through unknown time functions as coefficients. At the initial time the approximation is equal to the corresponding truncation of the series representing the initial data in the Hilbert space. Such truncation provides initial conditions for the coefficients of the approximation solution. Next, the Galerkin solution is substituted in the PDE equations, which are in turn projected into the span of the $n$ functions, so getting an ODE system which is a Cauchy problem for the coefficients. Nevertheless, since the non linear operator in the original PDE does not commute with the projector, \textit{Galerkin's solutions} are not solutions of the original system even though one chose a truncated initial data. A real (weak) solution of the original system could possibly be the limit of a Galerkin sequence of approximation solutions lying in spaces of increasing dimensions. We do not know whether such a solution exists.

Nevertheless, we are going to try a first study of the stability of the rest state in system (\ref{ansys}) by means of the Lorenz solutions, as usual constructed by means of only three basis functions: one to describe the velocity field (so loosing the non linear term in the first equation) and two for the temperature perturbation describing both the transport of energy and the decay (no matter how high $\Ra$ is) of any initial perturbation independent of $x$. 

For the convenience of the reader, we rewrite here the system 
\begin{align}\nonumber
    \left \{ \begin{array} {ll}
    & \pr (\omega_t + \be \eb \omega \psi_x + \eb (\psi_x \omega_z - \psi_z \omega_x) ) - \eb \Delta (\omega - \be \eb \psi_z  )
    = - \sqrt{\Ra} \eb \tau_x + \gamma \be ^2  e^{2\be z} \psi_{xx} \, ,\\
& \hspace{1cm} \\    
    &    \tau_t + \eb (\psi_x \tau_z - \psi_z \tau_x)  -\eb \Delta \tau =\sqrt{\Ra} \eb \psi_x\, .
\end{array} .
\right .
\end{align}

Let us choose
\begin{align}
    \left \{ \begin{array} {ll}
& \psi = A(t) \psi^{-111}\\
& \tau = B(t) \psi^{111} + C(t) \psi^{102}
\end{array} .
\right .
\end{align}
Hence, one can rewrite the equations in the system as $\psi$ and $\tau$ were actually the solutions, so that from definition (\ref{vorticity}):
\begin{equation*}
    \omega = -A(t) \mu^{11} \eb \psi^{-111}\, .
\end{equation*}
By replacing this result, it can be seen that in each term a \textit{weight} $\eb$ is present, but the first one in the second equation. As a consequence, when projecting the linear terms one can take advantage, at least a little bit, of the orthogonality relations in Theorem 2.1. 

The term with the highest derivatives in the first equation implies very long computations. In spite they are elementary, we fully write them at least once. Thus, by setting (for the sake of brevity) $\lambda=-\pi^2-\frac{\be^2}{4}$ and $\mu^{11}=-\mu$, we perform the usual scalar product in $L^2(\Omega)$:
\begin{align*}
   \langle\eb \Delta (\omega - \be \eb \dez \psi),  \psi^{-111} \rangle =\langle \Delta (\omega - \be \eb \dez \psi),  \psi^{-111} \rangle _\be =
\end{align*}   
\begin{align*}   
    =\!-A\frac{4}{\emph{l}}(\be ^2\lambda\! +\! \mu ^2 )\! \!\int_0^{\emph{l}}\! \!\sin ^2\!\!\left(\frac{2\pi x}{\emph{l}}\right)\!\!  dx \!\!\int_0^1\!\!  \sin ^2 \!(\pi z) \eb \! dz\! +
    \! A\frac{8}{\emph{l}}\pi \be \mu \!\!\int_0^{\emph{l}}\!\! \sin ^2\!\!\left (\frac{2\pi x}{\emph{l}} \right) \!dx\!\! \int_0^1\!\! \eb \sin (\pi z)\! \cos (\pi z)  dz\! = \\
    = -A (\be ^2\lambda + \mu ^2 ) \int_0^1 \left (1- \cos (2\pi z ) \right ) \eb \, dz + A 2\pi \be \mu \int_0^1 \eb \sin (2\pi z) \, dz = \\
    = -A (\be^2 \lambda + \mu ^2 ) \left ( \frac{e^ \be -1}{\be} - \int _0^1 \cos (2\pi z) \eb \, dz \right ) +A \pi \be \mu  \, 4\pi \frac{1-e^\be}{\be ^2 + 4 \pi ^2} = \\
    =-A (\be ^2 \lambda + \mu ^2 ) \left ( \frac{ e ^ \be -1}{\be} - \frac{\be (e^\be - 1)}{\be ^2 + 4\pi ^2} \right ) +A \be \mu 4\pi ^2 \frac{1-e^\be}{\be ^2 + 4 \pi ^2} = \\
    = -A (\be ^2 \lambda + \mu ^2 ) \left ( e^\be -1\right) \frac{\be ^2 + 4\pi ^2 - \be ^2}{\be (\be ^2 + 4\pi ^2)} + A \be \mu 4\pi ^2 \frac{1-e^\be}{\be ^2 + 4 \pi ^2} = \\
    = -A (\be ^2 \lambda + \mu ^2 ) 4 \pi ^2 \frac{e^\be -1}{\be (\be ^2 + 4 \pi ^2)} + A \be \mu 4 \pi ^2 \frac{1-e^\be}{\be ^2 + 4 \pi ^2} = \\
    = A (\be ^2( \lambda + \mu) +\mu ^2   ) 4 \pi ^2 \frac{1-e^\be }{\be (\be ^2 + 4 \pi ^2)} = \\
    =A \left(\be ^2 \frac{4 \pi ^2}{\emph{l}^2} +\mu ^2 \right) 4 \pi ^2 \frac{1-e^\be }{\be (\be ^2 + 4 \pi ^2)}\, .
\end{align*}
The analogous term in the second equation must be projected on two different basis function:
\begin{equation*}
   \langle \eb \Delta \tau , \psi^{111} \rangle  =  \langle \Delta \tau , \psi^{111} \rangle _ \be = B(t)\left ( \frac{\be^2}{4}- \pi ^2  -\frac{4 \pi ^2}{\emph{l}^2}\right ) \, ,
\end{equation*}
\begin{equation*}
  \langle\eb \Delta \tau , \psi^{102} \rangle =  \langle \Delta \tau , \psi^{102} \rangle _ \be = C(t)\, 2\left (\frac{\be ^2}{8} - 2\pi ^2 \right ) \, .
\end{equation*}
The term with partial time derivative is easily projected from the first equation
\begin{equation*}
    \langle \omega_t, \psi^{-111}\rangle=  \Dot{A}(t) \mu\, ,
\end{equation*}
while some extra computations are needed for the second equation, since the weight $\eb$ is missing:
\begin{align*}
    \langle  \tau_t , \psi^{111} \rangle  &=  \Dot{B}(t) \frac{4}{\emph{l}} \int _0^{\emph{l}} \cos^2 \left (\frac{2\pi x}{\emph{l}}  \right) \, dx \int_0^1 \sin ^2 (\pi z) \en \, dz = \\
    &=\Dot{B}(t) \int _0^1 \left ( 1 - \cos (2\pi z) \right) \en \, dz = \Dot{B}  \left ( \frac{1- e ^{-\be}}{\be} - \frac{\be (1- e ^{-\be})}{\be^2 + 4\pi ^2 } \right )= \\
    &= \Dot{B}(t) (1- e ^{-\be}) \frac{\be^2 + 4\pi ^2 - \be^2}{\be (\be^2 + 4\pi ^2)} = \Dot{B} \frac{1- e^{-\be}}{\be}\frac{4 \pi^2}{\be ^2 + 4 \pi ^2}\, ,
\end{align*}
and analogously one gets
\begin{equation*}
    \langle \tau_t, \psi^{102}\rangle=  \Dot{C}(t) \frac{1- e^{-\be}}{\be}\frac{16 \pi^2}{\be ^2 + 16 \pi ^2}\, .
\end{equation*}
However, from these results one can already see that the ordinary orthogonality relations keep in normal form the ODE system we are looking for.

Now, let us project the linear terms at the right hand side. In the first equation, one has
\begin{equation*}
     - \sqrt{\Ra} \langle\eb \tau_x ,  \psi^{-111}\rangle  = - \sqrt{\Ra} \langle \tau_x ,  \psi^{-111}\rangle _\be=  \sqrt{\Ra} B(t) \frac{2\pi}{\emph{l}} \|\psi^{-111}\|^2_\be = \sqrt{\Ra} B(t) \frac{2\pi} {\emph{l}} \, ,
\end{equation*}
and also
\begin{align*}
    \langle \gamma \be ^2  e^{2\be z} \psi_{xx} , \psi^{-111} \rangle &= \langle \gamma \be ^2  e^{\be z} \psi_{xx} , \psi^{-111}\rangle_\be=- A(t) \gamma \be ^2 \frac{4 \pi^2}{\emph{l}^2} \int_0^{\emph{l}} \sin ^2\left (\frac{2\pi x}{\emph{l}} \right) \, dx \int_0^1 \sin ^2 (\pi z) \eb \, dz = \\
    &= -  A(t) \gamma \be  \frac{16\pi ^4}{\emph{l}^2}\frac{e^\be -1 }{\be ^2 + 4\pi ^2}\, .
\end{align*}
The last linear term is at the right hand side of the second equation, therefore we have to project it twice. However, since of (\ref{dex})
$$\sqrt{\Ra} \eb \psi_x=\sqrt{\Ra} A(t)\eb \psi^{-111}_x=\sqrt{\Ra} A(t)\eb \frac{2\pi}{\emph{l}}\psi^{111}\, ,$$
it is easy to see
\begin{equation*}
\sqrt{\Ra}    \langle  \eb \psi_x ,  \psi^{111}\rangle =\sqrt{\Ra}\langle   \psi_x , \psi^{111} \rangle _ \be    = \sqrt{\Ra} A(t) \frac{2\pi }{\emph{l}} \|\psi^{111}\|^2_\be = \sqrt{\Ra} A(t) \frac{2\pi }{\emph{l}}   \, ,
\end{equation*}
while, clearly,
\begin{equation*}
\sqrt{\Ra}    \langle  \eb \psi_x ,  \psi^{102}\rangle =0   \, .
\end{equation*}
Concerning the non linear terms, the one in the first equation projects into zero, as expected by using only one basis function to approximate $\ve$. Then, we write the non linear term in the second equation: for the elementary but long calculations leading to the two projections of such term, we again refer the reader to \cite{Tru} and write the results. Thus, one considers
\begin{align*}
    &\eb (\tau_z \psi_x -\tau_x \psi_z) = \\
    &=\eb A \frac{2\pi }{\emph{l}}\sin (\pi z) \en \Bigg(2 B \left (\pi \cos (\pi z)- \frac{\be}{2} \sin (\pi z) \right) +C \sqrt{2}\cos  \left (\frac{2\pi x}{L} \right) \left (2\pi \cos (2 \pi z) - \frac{\be}{2} \sin (2\pi z)\right ) \Bigg)\, ,
\end{align*}
and its projections:
\begin{align*}
    & \langle \eb(\tau_z \psi_x -\tau_x \psi_z) , \psi^{111} \rangle= \\
    &= AC\sqrt{\frac{2}{\emph{l}}} \frac{4\pi ^2}{\emph{l}} (e^{-\frac{\be}{2}} - 1 ) \frac{\be ^2 + 64 \pi ^2- \be ^2 }{\be (\be^2 + 64 \pi ^2 ) } = AC  \frac{e^{-\frac{\be}{2}}-1}{\be} \frac{ 64 \pi ^4}{\be^2 + 64 \pi ^2  } .
\end{align*}
\begin{align*}
   &\langle \eb(\tau_z \psi_x -\tau_x \psi_z) , \psi^{102 } \rangle= \\
   &=AB\sqrt{\frac{2}{\emph{l}}} \frac{4\pi ^2}{\emph{l}} \frac{e^{-\frac{\be}{2}}-1}{\be}  \left (\frac{4\be ^2}{\be^2 + 16 \pi^2} - \frac{3\be^2}{\be^2 + 64\pi ^2}   -1\right )\, .
\end{align*}
Having collected all terms, we can substitute them in the system and write the three ODE of the Lorenz system:
\begin{align*}
    \label{sistemastratificatoABC}
    &\Dot{A}= -\frac{4 \pi ^2 \Prr}{\mu (\be ^2 +4\pi ^2 )} \left (\mu ^2 + \be ^2 \frac{4 \pi ^2}{\emph{l}^2}  + \gamma\be ^2 \frac{4 \pi ^2}{\emph{l}}  \right ) \frac{e^\be - 1}{\be} A + \frac{2 \pi \Prr \sqrt{\Ra}}{\mu \emph{l}} B, \\
    &\Dot{B}= \sqrt{\frac{2}{\emph{l}}}\frac{\be ^2 + 4 \pi ^2}{\be ^2 + 16 \pi ^2} \frac{64 \pi ^2}{1 +e^{-\frac{\be}{2}}} \frac{1}{\emph{l}} AC - \frac{\be ^2 + 4 \pi ^2}{4 \pi ^2} \frac{\be}{1 - e^ {- \be}} \left (\pi ^2 + \frac{4 \pi ^2}{\emph{l}^2} - \frac{\be ^2}{4} \right ) B + \sqrt{\Ra}\frac{\be ^2 + 4 \pi ^2}{2 \pi \emph{l}} \frac{\be}{1 - e^ {- \be}} A, \\
    &\Dot{C} =\sqrt{\frac{2}{\emph{l}}} \frac{\be ^2 + 16 \pi ^2}{8 \emph{l}} \frac{2}{1 + e^{-\frac{\be}{2}}} \left (\frac{4\be ^2}{\be ^2 + 16 \pi ^2} - \frac{3\be ^2}{\be ^2 + 64 \pi ^2} -1 \right ) AB + \frac{\be ^2 + 16 \pi ^2}{8 \pi ^2} \frac{\be}{1 - e^ {- \be}} \left ( \frac{\be ^2}{8} - 2 \pi ^2 \right )C,
\end{align*}
In order to simplify the calculus, we rename the coefficients as follows
\begin{align*}
&  \Dot{A} = e_1 A + e_2 B \\
&  \Dot{B} =  e_3 AC + e_4 B + e_5 A \\
& \Dot{C} = e_6 AB + e_7 C
\, .
\end{align*}
As a matter of fact, our aim is to perform a scaling of this homogeneous system to transform it in the well-known classic Lorenz model, that is
\begin{align*}
&  \Dot{X} = \sigma (Y-X) \\
&  \Dot{Y} =  - XZ + rX -Y \\
& \Dot{Z} = XY - \delta Z
\end{align*}
and use all its known stability features.

Besides the free parameters implied by the unknown, i.e.
\begin{equation*}
    X=aA \qquad Y=bB \qquad Z=cC \qquad, 
\end{equation*}
we have a further parameter available by scaling the time as $s:=t\, d$, so that
\begin{equation*}
\frac{d}{dt}=\frac{ds}{dt}\frac{d}{ds}\, .
\end{equation*}
Therefore, the system can be rewritten in the unknown $(X(s),Y(s),Z(s))$
\begin{align*}
    & \frac{dX}{ds} = \frac{e_1}{d}X + \frac{e_2 \,a}{bd} Y \\
    &\frac{dY}{ds}  = \frac{e_3 \, b}{d \, a \, c} XZ + \frac{e_4}{d} Y + \frac{e_5 \, b}{a \, d} X  \\
    & \frac{dZ}{ds} = \frac{e_6 \, c}{d \, a \, b} XY + \frac{e_7}{d} Z.
\end{align*}
Then, if we set
\begin{equation*}
    \sigma = \frac{e_2 \,a}{bd}= - \frac{e_1}{d}\, ,
\end{equation*}
we obtain the first equation in the wanted form by only fixing
\begin{equation*}
    \frac{a}{b} = - \frac{e_1}{e_2}.
\end{equation*}
Next, necessarily, if one wants that the second coefficient in the second equation is $-1$, then

\begin{equation*}
    d = - e_4
\end{equation*}
and the second equation becomes
\begin{equation*}
    \frac{dY}{ds} = \frac{e_3 \, e_2}{e_4 \, e_1 \, c} XZ - Y +\frac{e_5 \, e_2}{ e_4  \, e_1} X\, .
\end{equation*}
Let us point out that our need is actually to identify the coefficient $r$ which, as explained below, is crucial for stability, and now we have it:
\begin{equation}\label{Raa}
r =  \frac{e_5 \, e_2}{ e_4  \, e_1} 
\end{equation}

Nevertheless, for the sake of completeness, we finish writing the scaling equations: the first coefficient in the second equation must be $-1$
\begin{align*}
     \frac{e_3 \, e_2}{e_4 \, e_1 \, c} = -1 \quad \iff \quad c= - \frac{e_3 \, e_2}{e_4 \, e_1 } \, .
\end{align*}
The third equation becomes
\begin{equation*}
    \frac{dZ}{ds} = -\frac{e_6 \, e_3 \, e_2 ^2}{e_4 ^2 \, e_1^2 \, b^2 } XY - \frac{e_7}{e_4} Z\, ,
\end{equation*}
so that, by comparing it with Lorenz third equation, we find
\begin{align*}
     -\frac{e_6 \, e_3 \, e_2 ^2}{e_4 ^2 \, e_1^2 \, b^2 } = 1 \quad & \iff \quad  b^2 = -\frac{e_3 \, e_6 \, e_2 ^2}{e_1 ^2 \, e_4^2  } \\
    & \frac{e_7}{e_4} = \delta
\end{align*}
and it can be easily verified that
 $b^2 >0$ holds true.
  
Finally, it is interesting to stress how the coefficient $r$ depends only on the coefficients of linear terms. As a matter of fact, in dynamical systems the linear part of the operator which defines the time derivative helps a lot. To this purpose, let us recall  
\begin{defin}
Given the Cauchy problem
\begin{align*} 
    \left \{ \begin{array}{ll}
         & \Dot{\Vec{y}} = \Vec{F} (\Vec{y}) \\
         & \Vec{y}(0) =\Vec{y}_0
    \end{array}, \right .  
\end{align*}
if $\vec{F} (\vec{y_0}) = \vec{0}$, with $\vec{F}\in \R^n$ continuous in $\R^n$, then the problem allows the unique solution $\vec{y}(t) = \vec{y_0}$ and $\vec{y_0}$ is called a critical point
\end{defin}
If $r\leq 1$ the origin is the only critical point for Lorenz system, otherwise one has also these two
$$P_{\pm} = ( \pm \sqrt{\delta (r-1)}, \pm \sqrt{\delta (r-1)}, r-1)\, .$$
If $\Vec{y}_0$ is different from the origin, by a translation one can always study it as it were the origin: this can be done by means of the following standard result
 
\begin{theo} 
Assume that the origin is a critical point for the system above and consider the linear expansion
$$\Vec{F} (\Vec{y}) = J \Vec{y} + \Vec{\sigma} (\Vec{y}) \|\Vec{y} \|\qquad \displaystyle \underset{\|\vec{y}\| \to 0}{\lim} \Vec{\sigma} (\Vec{y}) =0\, ,$$
where the Jacobian $J$ is evaluated in the origin. If all the eigenvalues of $J$ have negative real part then $\Vec{y}(t)=\Vec{0}$ is asymptotically stable.
If $J$ has at least an eigenvalue with positive real part, then $\Vec{y}(t)=\Vec{0}$ is unstable.
\end{theo}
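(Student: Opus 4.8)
The statement is the classical principle of linearized stability, and I would prove the two assertions separately, both built on the Duhamel (variation of constants) representation of the solution of $\Dot{\Vec{y}}=J\Vec{y}+\Vec{\sigma}(\Vec{y})\|\Vec{y}\|$,
\begin{equation*}
\Vec{y}(t)=e^{tJ}\Vec{y}_0+\int_0^t e^{(t-s)J}\,\Vec{\sigma}(\Vec{y}(s))\,\|\Vec{y}(s)\|\,ds\, .
\end{equation*}
For the stable case I would first use the Jordan form of $J$ to fix $M\ge 1$ and $\omega>0$ with $\|e^{tJ}\|\le M e^{-\omega t}$ for $t\ge 0$. Given any $\eps\in(0,\omega/M)$, continuity of $\Vec{\sigma}$ and $\Vec{\sigma}(\Vec{0})=\Vec{0}$ allow to pick $\rho>0$ so small that $\|\Vec{\sigma}(\Vec{y})\|\le\eps$ on the ball $\|\Vec{y}\|\le\rho$. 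Then for $\|\Vec{y}_0\|<\rho/M$ a continuation (bootstrap) argument shows the solution remains in that ball: while it does, setting $u(t)=e^{\omega t}\|\Vec{y}(t)\|$ the Duhamel formula gives $u(t)\le M\|\Vec{y}_0\|+M\eps\int_0^t u(s)\,ds$, so Gronwall's lemma yields $u(t)\le M\|\Vec{y}_0\|e^{M\eps t}$, i.e. $\|\Vec{y}(t)\|\le M\|\Vec{y}_0\|e^{-(\omega-M\eps)t}$. This estimate is $<\rho$ for all $t$, which closes the bootstrap, and it tends to $0$; hence $\Vec{y}\equiv\Vec{0}$ is (exponentially) asymptotically stable.

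For the unstable case the plan is a Chetaev-cone argument. First I would split $\Rn=E_u\oplus E_s$ into the spans of the generalized eigenspaces of $J$ for the eigenvalues with positive, respectively non-positive, real part, with spectral projections $P_u,P_s$; by hypothesis $E_u\neq\{\Vec{0}\}$. Since the spectrum is finite there is a gap, so one can choose $0<\beta<\alpha$ and $M\ge 1$ with $\|e^{-tJ}P_u\|\le M e^{-\alpha t}$ and $\|e^{tJ}P_s\|\le M e^{\beta t}$ for $t\ge 0$: the $J$-flow is a backward contraction on $E_u$ and grows at most like $e^{\beta t}$ on $E_s$. After replacing the Euclidean norm by adapted (Lyapunov) inner products on $E_u$ and $E_s$ so that these rates are seen infinitesimally, I would introduce the indefinite quadratic form $V(\Vec{y}):=\|P_u\Vec{y}\|_u^2-\|P_s\Vec{y}\|_s^2$ and show that, on a sufficiently small ball and at every point with $V(\Vec{y})\ge 0$, $\Vec{y}\neq\Vec{0}$, one has $\dot V(\Vec{y})\ge c\,\|\Vec{y}\|^2$ for a fixed $c>0$: the linear part contributes a term bounded below by $2(\alpha-\beta)\|P_u\Vec{y}\|_u^2$ on the cone $V\ge 0$, while $\Vec{\sigma}(\Vec{y})\|\Vec{y}\|$ is of lower order and gets absorbed by shrinking the ball. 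Then $\{V>0\}$ is a Chetaev cone with vertex at the origin, and the differential inequality forces every trajectory starting in it — arbitrarily close to $\Vec{0}$ — to leave a fixed small ball; hence $\Vec{y}\equiv\Vec{0}$ is unstable.

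The stable half is routine (Duhamel, Gronwall, continuation). The real work is the instability half, and the delicate points I expect to fight with are: (i) that $E_s$ need not be a genuinely stable subspace when $J$ has eigenvalues on the imaginary axis — which is why one must pass to $\beta>0$ and to adapted norms rather than rely on plain decay estimates; and (ii) checking that the strict sign of $\dot V$ on the cone boundary survives the nonlinearity \emph{uniformly} on a full neighbourhood of the origin. This uniformity, rather than any single inequality, is the crux, and it is exactly what the hypothesis $\Vec{\sigma}(\Vec{y})\to\Vec{0}$ as $\|\Vec{y}\|\to0$ is designed to provide.
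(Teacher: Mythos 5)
The paper does not prove this statement at all: it is quoted as a ``standard result'' (Lyapunov's indirect method / principle of linearized stability) and immediately applied to the Jacobian of the Lorenz system, so there is no internal proof to compare yours against. Your sketch is the classical textbook argument and is essentially correct: the stable half via Duhamel, the exponential bound $\|e^{tJ}\|\le Me^{-\omega t}$, smallness of $\Vec{\sigma}$ near the origin, Gronwall applied to $u(t)=e^{\omega t}\|\Vec{y}(t)\|$, and a continuation argument is complete in all essentials (note only that the hypothesis is $\Vec{\sigma}(\Vec{y})\to\Vec{0}$ as $\|\Vec{y}\|\to0$, which is exactly what you use; no continuity of $\Vec{\sigma}$ away from the origin is needed). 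The unstable half via a Chetaev cone built from the spectral splitting $E_u\oplus E_s$, adapted inner products with rates $\alpha>\beta>0$, and the indefinite form $V=\|P_u\Vec{y}\|_u^2-\|P_s\Vec{y}\|_s^2$ is the standard route and your identification of the two delicate points (handling eigenvalues on the imaginary axis by conceding growth $e^{\beta t}$ on $E_s$, and uniform absorption of the nonlinearity on the cone) is accurate. To turn the sketch into a proof you would still need to write out two things explicitly: (i) the computation of $\dot V$ in the adapted norms showing $\dot V\ge c\|\Vec{y}\|^2$ on $\{V\ge0\}\cap\{\|\Vec{y}\|\le\rho\}$, using $\|P_s\Vec{y}\|_s\le\|P_u\Vec{y}\|_u$ on the cone; and (ii) the concluding Chetaev step that a trajectory starting in $\{V>0\}$ arbitrarily close to the origin cannot cross the lateral boundary $\{V=0\}$ (since $V$ is strictly increasing along it inside the ball) and, because $V$ is bounded on the ball while $\dot V$ is bounded below by a positive constant on the compact region $\{V\ge V(\Vec{y}_0)\}\cap\{\|\Vec{y}\|\le\rho\}$, must exit the ball in finite time, which is instability. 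Since only the eigenvalue signs of the explicit $3\times3$ Jacobian are used later in the paper, an alternative and shorter route for the unstable case (restricted to hyperbolic $J$, which is what actually occurs for $r>1$) would be to invoke the unstable-manifold construction or simply a scalar comparison along the expanding eigendirection; your more general Chetaev argument buys the theorem exactly as stated, i.e. without any hyperbolicity assumption on the remaining eigenvalues.
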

For the Lorenz system, such Jacobian is
\begin{equation*}
    J = \left (\begin{matrix}
    - \sigma & \sigma & 0\\
    r & -1 & 0\\
    0 & 0 & - \delta
    \end{matrix} \right).
\end{equation*}
Its eigenvalue equation, besides $\lambda_3=-\delta$, allows also for the two (real) solutions
\begin{equation*} 
    \lambda_{1,2} = \frac{-(\sigma +1) \mp \sqrt{(\sigma + 1)^2 + 4\sigma (r-1)}}{2}\, .
\end{equation*}
Here, the \textit{plus} one starts to be positive as soon as $r>1$. One can also prove that the new critical points are now stable.

For this reason, let us write the expression of $r$: from $(\ref{Raa})$

 \begin{align*}
    &r=\frac{2 \pi \Prr \Ra}{\mu \emph{l}} \frac{\be ^2 + 4 \pi ^2}{2 \pi \emph{l}}  \frac{\be}{1 - e^ {- \be}} \frac{ \mu (\be ^2 + 4 \pi ^2 )}{4 \pi ^2 \Prr} \frac{\be}{e ^ \be - 1} \frac{1}{\mu ^2 +\be ^2 \frac{4 \pi ^2}{\emph{l}^2} + \gamma \be ^2 \frac{4 \pi ^2}{\emph{l}} } \frac{4 \pi ^2}{\be ^2 + 4 \pi ^2} \frac{1 - e^ {- \be}}{\be}
    \frac{1}{\pi ^2 +\frac{4 \pi ^2}{\emph{l}^2} - \frac{\be ^2}{4}} = \\
    & = \frac{\Ra}{\emph{l} ^2} (\be ^2 + 4 \pi ^2 ) \frac{\be}{e ^ \be - 1} \frac{1}{\mu ^2 +\be ^2 \frac{4 \pi ^2}{\emph{l}^2} + \gamma \be ^2 \frac{4 \pi ^2}{\emph{l}} } \frac{1}{\pi ^2 +\frac{4 \pi ^2}{\emph{l}^2} - \frac{\be ^2}{4}} .
\end{align*}
 Next, we set $r=1$ and deduce the critical value $\Ra^*$ for the anelastic model 
 \begin{align*}
    \Ra^*_{\be} &= \frac{\emph{l}^2}{\be ^2 + 4 \pi ^2} \frac{e ^ \be - 1}{\be} \left (\mu ^2 +\be ^2 \frac{4 \pi ^2}{\emph{l}^2} + \gamma \be ^2 \frac{4 \pi ^2}{\emph{l}}  \right ) \left (\pi ^2 + \frac{4 \pi^2}{\emph{l}^2} - \frac{\be ^2}{4} \right ) = \\
    & = \frac{\emph{l}^2}{\be ^2 + 4 \pi ^2} \frac{e ^ \be - 1}{\be} \left ( \left ( \pi ^2 + \frac{4 \pi^2}{\emph{l}^2} + \frac{\be ^2}{4}\right ) ^2 +\be ^2 \frac{4 \pi ^2}{\emph{l}^2} + \gamma \be ^2 \frac{4 \pi ^2}{\emph{l}}  \right ) \left (\pi ^2 + \frac{4 \pi^2}{\emph{l}^2} - \frac{\be ^2}{4} \right ).
\end{align*}
\section{Conclusions}
By developing in Taylor series with respect to $\be$ and considering only first order terms, we can conclude what follows
\begin{itemize}
    \item if $\be =0$, as is known
    \begin{equation*}
        \Ra^*=\frac{\alpha ^3\emph{l}^2}{4\pi^2}
    \end{equation*}
    \item if $\be >0$, but first order small in $\be$
    \begin{equation*}
    \Ra^*_\be  \simeq \frac{\alpha ^3\emph{l}^2}{4\pi^2} \left (1 + \frac{\be}{2}\right )> \Ra^*\, ,
\end{equation*}
\end{itemize}
because
    \begin{equation*}
        \frac{e ^ \be - 1}{\be}  \simeq 1 + \frac{\be}{2}    \, .
    \end{equation*}
We have so proven that the stability of the rest state is increased by the factor $\be$.

\vspace{1cm}
\textbf{Conflict of interest} The authors declare that there is no conflict of interest regarding the publication of this article. 

\textbf{Data Availability} This manuscript has no associated data.

\end{document}